\documentclass[reqno,11pt]{amsart}
\usepackage{amsmath, amsfonts,amsthm,amssymb,amsbsy,upref,color,graphicx,amscd,hyperref,enumerate}
\usepackage[active]{srcltx}
\usepackage[latin1]{inputenc}
\usepackage{bbm}
\usepackage{algorithm}
\usepackage{algorithmicx}
\usepackage{algpseudocode}
\usepackage{subfig} 
\usepackage[a4paper,margin=2.96truecm]{geometry}
\usepackage{pgf}
\usepackage{xfrac}

\def\eps{{\varepsilon}}

\def\O{\Omega}
\def\R{\mathbb{R}}

\def\V{\mathcal{V}}
\newcommand{\be}{\begin{equation}}
\newcommand{\ee}{\end{equation}}
\newcommand{\bib}[4]{\bibitem{#1}{\sc#2: }{\it#3. }{#4.}}

\def \ba {\begin{array}}
\def \ea {\end{array}}
\def \dis {\displaystyle}

\numberwithin{equation}{section}
\theoremstyle{plain}
\newtheorem{theo}{Theorem}[section]
\newtheorem{lemm}[theo]{Lemma}

\theoremstyle{remark}
\newtheorem{rema}[theo]{Remark}

\title[Optimal potentials for problems with changing sign data]{Optimal potentials for problems\\ with changing sign data}

\author{Giuseppe Buttazzo, Faustino Maestre, Bozhidar Velichkov}


\begin{document}

\maketitle

\begin{abstract}
We consider optimal control problems where the state equation is an elliptic PDE of a Schr\"odinger type, governed by the Laplace operator $-\Delta$ with the addition of a potential $V$, and the control is the potential $V$ itself, that may vary in a suitable admissible class. In a previous paper (Ref. \cite{bgrv14}) an existence result was established under a monotonicity assumption on the cost functional, which occurs if the data do not change sign. In the present paper this sign assumption is removed and the existence of an optimal potential is still valid. Several numerical simulations, made by {\tt FreeFem++}, are shown.
\end{abstract}

\medskip\textbf{Keywords:} Schr\"odinger operators, optimal potentials, shape optimization, free boundary, capacitary measures, stochastic optimization

\medskip\textbf{2010 Mathematics Subject Classification:} 49J45, 49Q10, 35J10, 49A22, 35J25, 49B60

\section{Introduction and statement of the problem}\label{sintro}

In the present paper we consider optimization problems of the form
\be\label{minpb}
\min\bigg\{\int_D g(x)u(x)\,dx\ :\ -\Delta u+Vu=f,\ u\in H^1_0(D),\ V\in\V\bigg\}.
\ee
Here $D$ is a fixed bounded domain of $\R^d$, $f$ and $g$ are two given functions in $L^2(D)$, and the potential $V$ may vary in the admissible class $\V$ which is described below. Problem \eqref{minpb} is then an optimal control problem where $H^1_0(D)$ is the space of states, $\V$ is the set of admissible controls, $-\Delta u+Vu=f$ is the state equation, and $\int_D g(x)u(x)\,dx$ is the cost functional.

Problems of this form have been considered in \cite{bgrv14} under some assumptions on the admissible class $\V$. In particular, the admissible class $\V$ was taken of the form
$$\V=\left\{V:D\to[0,+\infty]\ :\ V\hbox{ Lebesgue measurable, }\int_D\Psi(V)\,dx\le1\right\}$$
with the function $\Psi$ satisfying some qualitative conditions. For instance, in order to approximate shape optimization problems with Dirichlet condition on the free boundary, the choice
$$\Psi(s)=e^{-\alpha s}$$
with $\alpha$ small, was proposed. More precisely, as $\alpha\to0$ the problems with the parameter $\alpha$ were shown to $\Gamma$-converge to the shape optimization problem with a volume constraint $|\O|\le1$ being $\O$ the shape variable. The existence of an optimal potential $V_{opt}$ was shown under the key assumption (see Theorem 4.1 of \cite{bgrv14}) to have a cost functional depending on the potential $V$ in a monotonically increasing way. This occurs, by the maximum principle, when $f\ge0$ and $g\le0$, and in this case the constraint is saturated, in the sense that $\int_D\Psi(V_{opt})\,dx=1$.

When the data $f$ and $g$ are allowed to change sign, the structure of the proof above is not valid any more and the question of the existence of an optimal potential was open. Similar questions arise for shape optimization problems, where again the monotonicity of the cost plays a crucial role.

The case of shape optimization problems with changing sign data was recently considered in \cite{buve17} where a new approach was proposed, allowing to obtain the existence of optimal shapes in a larger framework allowing general functions $f$ and $g$. We adopt here an approach similar to the one of \cite{buve17}, adapted to treat the case of potentials. Of course, when $f$ and $g$ may change sign, the constraint does not need to be saturated, in the sense that we may expect for an optimal potential $V_{opt}$ some situations in which $\int_D\Psi(V_{opt})\,dx<1$.

Problems of the kind considered here intervene in some variational problems with uncertainty, where the right-hand side $f$ is only known up to a probability $P$ on $L^2(D)$ (see for instance \cite{buve17} for the shape optimization framework). Other kinds of uncertainties can be treated by the so-called {\it worst case analysis}; in the case of shape optimization problems we refer for this topic to \cite{all14}, to \cite{bebuve} and to references therein.

We stress the fact that in our case the assumption that the cost function is linear with respect the state variable $u$ is crucial; otherwise simple examples show that an optimal shape or an optimal potential may not exist (see for instance \cite{bubu05}, \cite{bdm91} and \cite{bdm93}) and the optimal solution only exists in a relaxed sense in the space of capacitary measures, introduced in \cite{dmmo87}.

In Section \ref{sexist} we give the precise statement of the existence result and its proof. In Section \ref{snec} we provide some necessary conditions the optimal potentials have to fulfill. Finally, in Section \ref{snumer} we provide several numerical simulations that show the optimal potentials in some two dimensional cases.

\section{Existence of optimal potentials}\label{sexist}

In this section we consider the optimization problem \eqref{minpb} with
$$\V=\left\{V:D\to[0,+\infty]\ :\ V\hbox{ Lebesgue measurable, }\int_D\Psi(V)\,dx\le1\right\}.$$
On the function $\Psi:[0,+\infty]\to[0,+\infty]$ we assume that:
\begin{itemize}
\item[i)]$\Psi$ is strictly decreasing;
\item[ii)]there exist $p>1$ such that the function $s\mapsto\Psi^{-1}(s^p)$ is convex.
\end{itemize}
For instance the following functions:
\begin{enumerate}
\item $\Psi(s)=s^{-p}$, for any $p>0$,
\item $\Psi(s)=e^{-\alpha s}$, for any $\alpha>0$,
\end{enumerate}
satisfy the assumptions above. We always assume that the admissible class $\V$ is nonempty, that is $|D|\Psi(+\infty)\le1$.

It is known that the relaxed form of the optimization problem \eqref{minpb} involves capacitary measures, that is nonnegative Borel measures on $D$, possibly taking the value $+\infty$, that vanish on all sets of capacity zero. For all the details about capacitary measures and their use in optimization problems we refer to the book \cite{bubu05}.

Here we notice that the admissible capacitary measures obtained as limits of sequences $(V_n)$ of potentials in $\V$ are the measures $\mu$ such that their absolutely continuous part $\mu^a$ with respect to the Lebesgue measure belong to $\V$. Let us denote by $\overline\V$ this relaxed class of measures. The relaxed problem associated to \eqref{minpb} is then
\be\label{relpb}
\min\bigg\{\int_D g(x)u(x)\,dx\ :\ -\Delta u+\mu u=f,\ u\in H^1_0(D)\cap L^2(\mu),\ \mu\in\overline\V\bigg\},
\ee
where the precise meaning of the state equation has to be intended in the weak form
$$\int_D\nabla u\nabla\phi\,dx+\int_D u\phi\,d\mu=\int_D f\phi\,dx\qquad\forall\phi\in H^1_0(D)\cap L^2(\mu).$$
It is convenient to introduce the resolvent operator $R_\mu$ associated to the operator $-\Delta+\mu$; it is well known that $R_\mu$ is self-adjoint on $L^2(D)$.

Since the class of capacitary measures is known to be compact with respect to the $\gamma$ convergence, the relaxed problem \eqref{relpb} admits a solution $\mu\in\overline\V$. We aim to show that we can actually find a solution in the original admissible class $\V$.

\begin{lemm}\label{l1}
Let $\mu\in\overline\V$ be a solution of the relaxed optimization problem \eqref{relpb}. Then
\be\label{rmu}
R_\mu(g)R_\mu(f)\le0\qquad\hbox{a.e. on }D.
\ee
\end{lemm}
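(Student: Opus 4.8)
The plan is to reformulate the cost in terms of the resolvent and then rule out the set where the product in \eqref{rmu} is positive by an explicit one-sided perturbation of the optimal measure. Write $u:=R_\mu(f)$ for the state and $w:=R_\mu(g)$, so that the cost functional is $J(\mu)=\int_D g\,u\,dx=\int_D g\,R_\mu(f)\,dx$; since $R_\mu$ is self-adjoint on $L^2(D)$, the quantity $R_\mu(g)R_\mu(f)=w\,u$ is exactly what will appear as the first variation of $J$. I would argue by contradiction: assuming that $w\,u>0$ on a set $A\subset D$ of positive Lebesgue measure, I would construct an admissible competitor with strictly smaller cost by \emph{increasing} $\mu$ on $A$.

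For the perturbation, fix a bounded $\phi\ge0$ with $\phi>0$ on $A$ and $\phi=0$ outside $A$, and set $\mu_t:=\mu+t\phi\,dx$ for $t\ge0$. This $\mu_t$ is still a capacitary measure, and it is admissible: its absolutely continuous part is $\mu^a+t\phi\ge\mu^a$, so by assumption (i) ($\Psi$ strictly decreasing) one has $\Psi(\mu^a+t\phi)\le\Psi(\mu^a)$ pointwise, whence $\int_D\Psi\big((\mu_t)^a\big)\,dx\le\int_D\Psi(\mu^a)\,dx\le1$ and $\mu_t\in\overline\V$. Writing $u_t:=R_{\mu_t}(f)$ and subtracting the two state equations gives $(-\Delta+\mu)(u_t-u)=-t\phi\,u_t$, that is $u_t-u=-t\,R_\mu(\phi\,u_t)$. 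Hence, using self-adjointness,
\begin{equation*}
J(\mu_t)-J(\mu)=\int_D g\,(u_t-u)\,dx=-t\int_D R_\mu(g)\,\phi\,u_t\,dx=-t\int_D w\,\phi\,u_t\,dx.
\end{equation*}

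Finally I would expand the right-hand side to first order. Because $\mu_t\ge\mu\ge0$, the Poincar\'e inequality on $D$ yields a uniform bound $\|R_{\mu_t}\|_{L^2\to L^2}\le C$, so $\|u_t\|_{L^2}\le C\|f\|_{L^2}$ and $\|u_t-u\|_{L^2}=t\,\|R_\mu(\phi u_t)\|_{L^2}=O(t)$. Consequently $\int_D w\,\phi\,u_t\,dx=\int_D w\,\phi\,u\,dx+O(t)$ and
\begin{equation*}
J(\mu_t)-J(\mu)=-t\int_A \phi\,(w\,u)\,dx+O(t^2).
\end{equation*}
Since $w\,u>0$ on $A$ and $\phi>0$ there, the leading coefficient $\int_A\phi\,(w\,u)\,dx$ is strictly positive, so $J(\mu_t)<J(\mu)$ for $t>0$ small, contradicting the optimality of $\mu$; therefore $w\,u=R_\mu(g)R_\mu(f)\le0$ a.e. on $D$. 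The only delicate point, and the part I expect to require the most care, is the rigorous justification of this expansion --- namely the continuity $u_t\to u$ in $L^2(D)$ together with the uniform resolvent estimate --- which rests on the nonnegativity of $\mu_t$ and the resulting spectral gap; everything else is a direct computation. Note also that, as anticipated in the introduction, the argument uses perturbations in one direction only (raising $\mu$, which is always feasible), so it does not force the constraint to be saturated.
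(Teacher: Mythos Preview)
Your argument is correct and follows essentially the same route as the paper: perturb the optimal measure by $\mu+\eps\phi$ with $\phi\ge0$ (admissible because $\Psi$ is decreasing), compute the first variation via self-adjointness of $R_\mu$, and conclude $R_\mu(g)\,R_\mu(f)\le0$ a.e. The only cosmetic differences are that you phrase it as a contradiction supported on the bad set $A$ rather than letting $\phi$ run over all nonnegative test functions, and that you justify $u_t\to u$ by a direct resolvent bound instead of invoking $\gamma$-convergence as the paper does; both are equally valid and the underlying computation is the same.
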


\begin{proof}
For every $\eps>0$ let $\mu_\eps=\mu+\eps\phi$, where $\phi$ is a generic continuous nonnegative function. Since $\Psi$ is decreasing, the capacitary measure $\mu_\eps$ still belongs to the relaxed admissible class $\overline\V$, and so
\be\label{eq1}
\int_D g(x)u(x)\,dx\le\int_D g(x)u_\eps(x)\,dx
\ee
where $u$ and $u_\eps$ respectively denote the solutions of
\[\begin{split}
&-\Delta u+\mu u=f\hbox{ in }D,\qquad u\in H^1_0(D)\cap L^2(\mu),\\
&-\Delta u_\eps+\mu_\eps u_\eps=f\hbox{ in }D,\qquad u_\eps\in H^1_0(D)\cap L^2(\mu_\eps).
\end{split}\]
Then, setting $w_\eps=(u_\eps-u)/\eps$, we have
\be\label{eq2}
-\Delta w_\eps+\mu w_\eps=-\phi u_\eps
\ee
and by \eqref{eq1}
$$\int_D g(x)w_\eps(x)\,dx\ge0\;.$$
Since $\mu_\eps$ is $\gamma$-converging to $\mu$ we have that $u_\eps$ tends to $u$ in $L^2(D)$ and, by \eqref{eq2} we obtain that $w_\eps$ tends to $w$ in $L^2(D)$, where $w$ solves
$$-\Delta w+\mu w=-\phi u\hbox{ in }D,\qquad u\in H^1_0(D)\cap L^2(\mu)\;.$$
Since the resolvent operator $R_\mu$ of $-\Delta+\mu$ is self-adjoint, we have
$$0\le\int_D g(x)w(x)\,dx=-\int_D R_\mu(g)\phi u\,dx$$
which gives, since $\phi$ is arbitrary,
$$R_\mu(g) u\le0\qquad\hbox{a.e. on }D,$$
which is the conclusion \eqref{rmu}.
\end{proof}

\begin{lemm}\label{l2}
Assume that $g\ge0$ and let $\mu$ be a solution of the relaxed optimization problem \eqref{relpb}. Let $\nu\in\overline\V$ be another capacitary measure, with $\nu\le\mu$. Then
$$\int_D gR_\nu(f)\,dx\le\int_D gR_\mu(f)\,dx\;.$$
\end{lemm}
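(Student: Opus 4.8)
The plan is to adapt the first-variation computation already used in the proof of Lemma \ref{l1}, but now comparing the two states directly. Set $u=R_\mu(f)$ and $v=R_\nu(f)$, so that $u\in H^1_0(D)\cap L^2(\mu)$ and $v\in H^1_0(D)\cap L^2(\nu)$ solve $-\Delta u+\mu u=f$ and $-\Delta v+\nu v=f$. Writing $w=v-u$ and subtracting the two state equations, the terms containing $f$ cancel and one is left with
\[
-\Delta w+\nu w=(\mu-\nu)u\qquad\text{in }D,
\]
that is $w=R_\nu\big((\mu-\nu)u\big)$, where the right-hand side is read as the measure with density $u$ with respect to $\mu-\nu$ and the identity is understood in weak form against test functions in $H^1_0(D)\cap L^2(\nu)$. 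Note that the subtraction is arranged so as to keep the operator $-\Delta+\nu$ on the left: this makes the source proportional to $u=R_\mu(f)$, which is precisely the state for which Lemma \ref{l1} provides sign information.

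Next I would use the self-adjointness of $R_\nu$ to move $R_\nu$ from the source onto $g$. Taking $R_\nu(g)$ as test function in the weak formulation for $w$, and $w$ as test function in the one defining $R_\nu(g)$, the two left-hand sides coincide and yield the key identity
\[
\int_D g\,R_\nu(f)\,dx-\int_D g\,R_\mu(f)\,dx=\int_D g\,w\,dx=\int_D R_\nu(g)\,u\,d(\mu-\nu).
\]
Two of the three factors in the last integral have a sign for free: since $g\ge0$ the maximum principle gives $R_\nu(g)\ge0$, and $\mu-\nu\ge0$ by hypothesis. Thus the statement reduces to proving that $u=R_\mu(f)\le0$ holds $(\mu-\nu)$-almost everywhere.

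This last sign is exactly where Lemma \ref{l1} enters and is the main obstacle. Specialized to $g\ge0$ (so that $R_\mu(g)\ge0$), Lemma \ref{l1} gives $u\le0$ on the set $\{R_\mu(g)>0\}$, but only in the Lebesgue-a.e. sense, whereas here the inequality must be integrated against the possibly singular measure $\mu-\nu$. I would therefore first upgrade the conclusion of Lemma \ref{l1} to a quasi-everywhere statement, using that $R_\mu(g)$ and $u$ admit quasi-continuous representatives and that an inequality valid Lebesgue-a.e. for quasi-continuous functions holds q.e.; since capacitary measures do not charge sets of zero capacity, this makes $u\le0$ valid $\mu$-a.e. on $\{R_\mu(g)>0\}$, hence $(\mu-\nu)$-a.e. there because $\mu-\nu\le\mu$. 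It then remains to control the exceptional set $\{R_\mu(g)=0\}$: on the saturated region $\{\mu=+\infty\}$ one has $u=0$ since $u\in L^2(\mu)$, while on $\{\mu<+\infty\}$ the strong maximum principle for $-\Delta+\mu$ forces $R_\mu(g)>0$ because $g\ge0$ is not identically zero. Combining these facts gives $u\le0$ $(\mu-\nu)$-a.e., and the displayed identity then yields $\int_D g\,R_\nu(f)\,dx\le\int_D g\,R_\mu(f)\,dx$, as desired.
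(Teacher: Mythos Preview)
Your proof follows the paper's exactly: subtract the two state equations so that $-\Delta(u-v)+\nu(u-v)=-u(\mu-\nu)$, use the self-adjointness of $R_\nu$ to obtain $\int_D g(u-v)\,dx=-\int_D R_\nu(g)\,u\,d(\mu-\nu)$, and conclude from $R_\nu(g)\ge0$, $\mu-\nu\ge0$, and $u=R_\mu(f)\le0$ via Lemma~\ref{l1}. You are in fact more scrupulous than the paper at the last step---observing that Lemma~\ref{l1} a priori only yields $u\le0$ Lebesgue-a.e.\ on $\{R_\mu(g)>0\}$ and sketching, via quasi-continuous representatives and the strong maximum principle, why this suffices $(\mu-\nu)$-a.e.; the paper simply asserts $u\le0$ at this point without further comment.
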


\begin{proof}
The functions $u=R_\mu(f)$ and $v=R_\nu(f)$ respectively solve the PDEs
\[\begin{split}
&-\Delta u+\mu u=f\;,\\
&-\Delta v+\nu v=f\;.
\end{split}\]
Then we have
$$-\Delta(u-v)+\nu(u-v)=-u(\mu-\nu)$$
so that $u-v=R_\nu\big(-u(\mu-\nu)\big)$. Hence,
$$\int_D g(u-v)\,dx=\int_D gR_\nu\big(-u(\mu-\nu)\big)\,dx=-\int_D R_\nu(g)u(d\mu-d\nu)$$
where in the last equality we used the fact that $R_\nu$ is self-adjoint. Now, since $g\ge0$, by the maximum principle we have $R_\nu(g)\ge0$ and $R_\mu(g)\ge0$; then by Lemma \ref{l1} we have $u=R_\mu(f)\le0$ and so
$$\int_D g(u-v)\,dx\ge0,$$
as required.
\end{proof}

\begin{rema}\label{rema1}
It is easy to check that the same proof also works if we assume $g\le0$. Moreover, using the fact that the resolvent operators are self-adjoint, the relaxed optimization problem \eqref{relpb} can be written also in the form
$$\min\bigg\{\int_D fR_\mu(g)\,dx\ :\ \mu\in\overline\V\bigg\}$$
and so we can also assume $f\ge0$ (or $f\le0$) with no sign assumtion on $g$, and obtain for $\nu\le\mu$
$$\int_D gR_\nu(f)\,dx\le\int_D gR_\mu(f)\,dx\;.$$
\end{rema}

We are now in a position to prove the existence of an optimal potential in the original class $\V$.

\begin{theo}\label{mainPhi}
Let $D\subset\R^d$ be a bounded open set and let $\Psi$ satisfy the assumptions i) and ii) above. Then, for every $f,g\in L^2(D)$ with $g\ge0$, the original optimization problem \eqref{minpb} has a solution.
\end{theo}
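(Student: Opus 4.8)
The plan is to take a minimizer $\mu$ of the relaxed problem \eqref{relpb}---which exists by the $\gamma$-compactness of $\overline\V$ noted above---and to show that nothing is lost by discarding its singular part. Concretely, I would write the Lebesgue decomposition $\mu=\mu^a+\mu^s$ of $\mu$ with respect to the Lebesgue measure, denote by $V$ the density of the absolutely continuous part (so that $\mu^a=V\,dx$), and set $\nu:=\mu^a$. The candidate optimal potential for \eqref{minpb} is precisely this $V$.

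First I would check that $\nu$ is admissible and comparable to $\mu$. Since $\mu\in\overline\V$, by the very definition of the relaxed class its absolutely continuous part $\mu^a$ belongs to $\V$, i.e. $\int_D\Psi(V)\,dx\le1$. The measure $\nu=V\,dx$ is itself absolutely continuous, hence coincides with its own absolutely continuous part, which lies in $\V$; therefore $\nu\in\overline\V$. Moreover $\nu=\mu^a\le\mu^a+\mu^s=\mu$, so the hypotheses of Lemma \ref{l2} are satisfied.

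The heart of the argument is then a one-line comparison. Applying Lemma \ref{l2} with this $\nu\le\mu$ (this is where the assumption $g\ge0$ is used) yields
$$\int_D gR_\nu(f)\,dx\le\int_D gR_\mu(f)\,dx.$$
On the other hand $\mu$ is a minimizer and $\nu$ is admissible, so the reverse inequality $\int_D gR_\mu(f)\,dx\le\int_D gR_\nu(f)\,dx$ holds as well; thus the two costs coincide and $\nu$ is itself a solution of the relaxed problem. Since $\nu=V\,dx$ is a genuine potential with $V\in\V$, the associated state $u=R_\nu(f)=R_V(f)$ lies in $H^1_0(D)$ and solves $-\Delta u+Vu=f$ with the same optimal cost, so $V$ realizes the minimum in the original problem \eqref{minpb}.

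I expect the only delicate point to be the admissibility step, which rests on the characterization of $\overline\V$ as the class of capacitary measures whose absolutely continuous part belongs to $\V$; this identification is the place where assumptions i) and ii) on $\Psi$ enter. Once the characterization is granted, the fact that discarding $\mu^s$ leaves the constraint $\int_D\Psi(\mu^a)\,dx\le1$ untouched is immediate, since that constraint involves only the absolutely continuous part. All the genuine analytic content---the sign relation $R_\mu(g)R_\mu(f)\le0$ of Lemma \ref{l1} and the resulting monotonicity of the cost under $\nu\le\mu$ in Lemma \ref{l2}---has already been established, so the theorem amounts to assembling these facts together with the removal of the singular part.
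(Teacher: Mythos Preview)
Your argument is correct provided the characterization of $\overline\V$ stated just before the relaxed problem---that $\gamma$-limits of potentials in $\V$ are exactly the capacitary measures whose absolutely continuous part lies in $\V$---is taken as established. Granting that, your reduction ``replace the relaxed minimizer $\mu$ by its absolutely continuous part $\nu=\mu^a$ and invoke Lemma~\ref{l2}'' is clean and entirely valid.

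The paper, however, proceeds differently and does \emph{not} rely on that characterization, which it states without proof. Instead of taking the Lebesgue decomposition of $\mu$, the paper builds the candidate potential from the minimizing sequence itself: it sets $v_n=(\Psi(V_n))^{1/p}$, which is bounded in $L^p(D)$ by the constraint, extracts a weak $L^p$ limit $v$, and defines $V=\Psi^{-1}(v^p)$. Admissibility $V\in\V$ is then immediate from weak lower semicontinuity of the $L^p$ norm, and the key inequality $V\le\mu$ (with $\mu$ the $\gamma$-limit of $V_n$) is obtained by a strong--weak lower semicontinuity argument for the functional $(u,v)\mapsto\int_D u^2\,\Psi^{-1}(v^p)\,dx$. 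This is precisely the step where assumptions i) and ii) on $\Psi$ are actually used. Lemma~\ref{l2} is then applied exactly as you do.

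So the two routes differ in where the work is done. You package all the analytic content into the identification of $\overline\V$ and then finish in one line; the paper leaves that identification unproved and instead carries out the relevant part of it (producing some $V\in\V$ with $V\le\mu$) explicitly inside the proof. Your version is shorter and conceptually transparent, but as written it defers to a claim that the paper asserts only informally; the paper's version is self-contained. In fact the paper's argument can be read as a proof of the direction of the characterization you need: from $V\le\mu$ one gets $V\le\mu^a$ a.e., and since $\Psi$ is decreasing this yields $\int_D\Psi(\mu^a)\,dx\le\int_D\Psi(V)\,dx\le1$, i.e.\ $\mu^a\in\V$.
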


\begin{proof}
Let $V_n\in\V$ be a minimizing sequence for the optimization problem \eqref{minpb}. Then, $v_n=\big(\Psi(V_n)\big)^{1/p}$ is a bounded sequence in $L^p(D)$ and so, up to a subsequence, $v_n$ converges weakly in $L^p(D)$ to some function $v$. We prove that the potential $V=\Psi^{-1}(v^p)$ is a solution to \eqref{minpb}. By its definition we have $V\in\V$ and so it remains to prove that
$$\int_D gR_V(f)\,dx\le\liminf_n\int_D gR_{V_n}(f)\,dx\;.$$
Since the $\gamma$-convergence is compact, we may assume that, up to a subsequence, $V_n$ $\gamma$-converges to a capacitary measure $\mu\in\overline\V$, which implies
$$\int_D gR_\mu(f)\,dx=\lim_n\int_D gR_{V_n}(f)\,dx\;.$$
Therefore, it remains only to prove the inequality
\be\label{ineqV}
\int_D gR_V(f)\,dx\le\int_D gR_\mu(f)\,dx\;.
\ee
By the definition of $\gamma$-convergence, we have that for any $u\in H^1_0(D)$, there is a sequence $u_n\in H^1_0(D)$ which converges to $u$ in $L^2(D)$ and is such that
\[\begin{split}
\int_D|\nabla u|^2\,dx+\int_D u^2\,d\mu
&=\lim_{n\to\infty}\int_D|\nabla u_n|^2\,dx+\int_D u_n^2 V_n\,dx\\
&=\lim_{n\to\infty}\int_D|\nabla u_n|^2\,dx+\int_D u_n^2\Psi^{-1}(v_n^p)\,dx\\
&\ge\int_D|\nabla u|^2\,dx+\int_D u^2\Psi^{-1}(v^p)\,dx\\
&=\int_D|\nabla u|^2\,dx+\int_D u^2V\,dx,
\end{split}\]
where the inequality above is due to the strong-weak lower semicontinuity of integral functionals (see for instance \cite{busc}), which follows by the assumptions made on the function $\Psi$. Thus, for any $u\in H^1_0(D)$, we have 
$$\int_D u^2\,d\mu\ge\int_D u^2V\,dx,$$
which gives $V\le\mu$. The inequality \eqref{ineqV} now follows by Lemma \ref{l2}.
\end{proof}

\begin{rema}\label{rema2}
By Remark \ref{rema1} the same conclusion holds if $g\le0$, and also if $f\ge0$ (or $f\le0$) and no sign assumption on $g$.
\end{rema}

\section{Necessary conditions of optimality}\label{snec}

We assume in this section that the conditions above are satisfied, so that an optimal potential $V$ exists. In general, the constraint $\int_D\Psi(V)\,dx\le1$ is not always saturated, since the data $f$ and $g$ may change sign. Therefore passing to the problem with a Lagrange multiplier
$$\min\bigg\{\int_D gR_V(f)\,dx+\lambda\int_D\Psi(V)\,dx\bigg\}$$
we intend that $\lambda=0$ when $\int_D\Psi(V)\,dx<1$. We assume that the function $\Psi$ is differentiable, and we write the variations on $V$ and on $u$ as
$$V+\eps V',\qquad u+\eps u'.$$
We then obtain
$$\int_D gu\,dx+\lambda\int_D\Psi(V)\,dx\le\int_D g(u+\eps u')\,dx+\lambda\int_D\Psi(V+\eps V')\,dx.$$
An easy computation gives that for $\eps$ small we have
$$-\Delta u'+Vu'=-V'u$$
so that $u'=R_V(-V'u)$. Using the fact that the resolvent operator $R_V$ is self-adjoint, we deduce that for every $V'$
$$0\le\int_D gu'\,dx+\lambda\int_D\Psi'(V)V'\,dx=\int_D\big(-R_V(g)u+\lambda\Psi'(V)\big)V'\,dx.$$
Since $V'$ is arbitrary, we obtain
$$\begin{cases}
uR_V(g)=\lambda\Psi'(V)&\hbox{on the set }\{V>0\}\\
uR_V(g)\le\lambda\Psi'(V)&\hbox{on the set }\{V=0\}.
\end{cases}$$

Note that in the case of not saturated constraint we have $\lambda=0$, so that the necessary conditions above give
$$\begin{cases}
uR_V(g)=0&\hbox{on the set }\{V>0\}\\
uR_V(g)\le0&\hbox{on the set }\{V=0\}.
\end{cases}$$
In particular, when $g\ge0$ (not identically zero) we have $R_V(g)>0$ where $V$ is finite, so that the conditions above simply give
$$\begin{cases}
u=0&\hbox{on the set }\{V>0\}\\
u\le0&\hbox{on the set }\{V=0\}.
\end{cases}$$

\section{Some numerical simulations}\label{snumer}

In this section we present and show a numerical method in order to solve a problem of the kind of \eqref{minpb}.

We start showing as to get a gradient descent direction. Later we describe an algorithm for the optimization problem and finally we show some numerical experiments for some functions $g$ and different choices of the function $f$ which have non-constant sign, and diverse functions $\Psi(V)=\exp(-\alpha V)/m$ for different values of $\alpha>0$ and $m\in(0,1)$ in order to impose different volume constraints. 

\subsection{The descent direction}

Our goal is to solve numerically minimization problems of the form \eqref{minpb}:

\be\label{eq:num_cost}
\min\int_D g(x)u(x)\,dx
\ee
subject to
\be\label{eq:state}
\begin{cases}
-\Delta u+V u=f&\hbox{ in }D,\\
u=0&\hbox{ on }\partial D
\end{cases}\ee
\be\label{vol_con}
\int_D e^{-\alpha V(x)}\,dx\le m,
\ee
and where the optimal potential $V:D\to[0,+\infty]$ is a Lebesgue measurable function. In the case of shape optimization problems a domain $\O\subset D$ is associated to the potential
$$V(x)=\begin{cases}
0&\hbox{ if }x\in\O,\\
+\infty&\hbox{ if }x\in D\setminus\O,
\end{cases}$$
so that
$$|\O|=\int_D e^{-\alpha V(x)}\,dx.$$

Let us assume $V$ and $V'$ two admissible potentials and let us compute \emph{formally} the derivative of cost function
$$I(V)= \int_D g(x)u(x)\,dx$$
at the position $V$ in the direction $V'$. Under appropriate regularity hypotheses on $V$ and the associated state $u$, the first derivative of the cost functional \eqref{eq:num_cost} with respect to $V$ in any direction $V'$ exists and takes the form:
\be
\frac{d I(V)}{d V}\cdot V' =
\int_{D} V'(x)u(x)p(x)\,dx ,
\label{deriveE}
\ee
where $p$ is the unique solution of the adjoint equation
\be\label{adjoint_eq}
\begin{cases}
-\Delta p+V p=-g&\hbox{ in }D,\\
p=0&\hbox{ on }\partial D.
\end{cases}
\ee

For any $\eta\in\R^+$, $\eta\ll1$ we denote by $u^\eta$ the solution of \eqref{eq:state} for $V^\eta=V+\eta V'$, we would like to compute
\be\label{derive}
\frac{dI(V)}{dV}\cdot V'=
\lim_{\eta\to0}\frac{I(V+\eta V')-I(V)}{\eta}.
\ee
In this way, we put $u^\eta=u+\eta y^\eta$, where $y^\eta$ is the solution of
\be\label{eq:u_nu}
\begin{cases}
-\Delta y^\eta+V^\eta y^\eta=-V'u&\hbox{ in } D,\\
y^\eta=0&\hbox{ on }\partial D.
\end{cases}\ee
From \eqref{derive} and \eqref{adjoint_eq} we have
\[\begin{split}
\frac{I(V+\eta V')-I(V)}{\eta}&=\frac{1}{\eta}\int_D g(x) u^\eta(x)-g(x)u(x)\,dx\\
&=\int_Dg(x)y^\eta(x)\,dx\\
&=-\int_D\nabla p(x)\nabla y^\eta(x)\,dx-\int_DV(x)p(x)y^\eta(x)\,dx.
\end{split}\]
On the other hand, from the above formula and having in mind \eqref{eq:u_nu} and the  we arrive to
$$\ba{l}
\dis\frac{d I(V)}{d V}\cdot V' =
\lim_{\eta\to0}\left( \int_D V'(x)u(x)p(x)\,dx-\eta\int_DV'(x)y^\eta(x) p(x)\,dx\right)\\\dis= \int_D V'(x)u(x)p(x)\,dx.
\ea$$
Then, taking into account formula \eqref{deriveE}, in order to apply a gradient descent method it is enough to take the direction
$$V'(x)=-u(x)p(x).$$

In order to take into account the volume constraint \eqref{vol_con} on $V$, we introduce the Lagrange multiplier $\lambda\in\R$ and the functional
$$I_\lambda(V)=I(V)+\lambda\int_D e^{-\alpha V(x)}\,dx$$
and therefore,
\be\label{d_d2}
\frac{dI_\lambda(V)}{dV}\cdot V'=
\int_D V'(x)(u(x)p(x)-\lambda\alpha e^{-\alpha V(x)})\,dx.
\ee
where the multiplier $\lambda$ is determined in order to assure \eqref{vol_con}.

Thus, a general gradient algorithm to solve numerically the extremal problem \eqref{eq:num_cost} - \eqref{eq:state} - \eqref{vol_con} is the following.
\begin{itemize}
\item Initialization: choose an admissible $V_0$;
\item for $k\ge0$, iterate until convergence as follows:
\begin{itemize}
\item compute $u_k$ solution of \eqref{eq:state} and $p_k$ solution of \eqref{adjoint_eq}, both corresponding to $V=V_k$;
\item compute the associated descent direction $V'_k$ given by \eqref{d_d2} associated to $u_k$ and $p_k$;
\item update the potential $V_k$:
$$V_{k+1}=V_k+\eta_kV'_k,$$
with $\eta_k$ small enough to ensure the decrease of the cost function.
\end{itemize}
\end{itemize}

\subsection{Numerical Simulations}

For our numerical experiments we decided to use the free software FreeFEM++ v 3.50 (see {\tt http://www.freefem.org/}, see \cite{FF}), complemented with the library NLopt (see {\tt http://ab-initio.mit.edu/wiki/index.php/NLopt}) using the Method of Moving Asymptotes as the optimizing routing (see \cite{svanberg87}). This technique is a gradient method based on a spatial type of convex approximation where in each iteration a strictly convex approximation subproblem is generated and solved. For the implementation of this algorithm the main required data are the initialization $V_0$, the associated routines to the cost and volume function and the associated routines to the gradient of the cost and volume function using the adjoint state. The admissible potentials $V$ take values in $[0,+\infty]$ but from the numerical point of view it is advisable to constrain $V$ to take values on a bounded interval $[0,V_{max} ]$, with $V_{max}$ large enough. These data are required for the algorithm too. We observe that, when $V$ takes its maximal value $V_{max}$, the state $u$ is very small and practically vanishes, according to the well-posed character of the extremal problem and the state equation. This is consistent with the necessary conditions of optimality obtained in Section \ref{snec}.

We show the numerical result for some experiments. We have made the simulation in the two dimensional case and we have chosen $D=(0,1)\times(0,1)$. The optimization criterion we consider is the minimization of the average solution $u=R_V(f)$ on $D$ for a given right-hand side $f$, where the potential $V$ varies in the admissible class
$$\V=\left\{V\ge0,\ \int_D e^{-\alpha V(x)}\,dx\le m\right\}.$$
Therefore, in the following we take $g=1$ and we consider various choices for $f$ and for the parameters $\alpha$ and $m$. It has to be noticed that, if $f\ge0$, by the maximum principle all the solutions $u$ are nonnegative, so that the optimization problem has the trivial solution $V=+\infty$ for which the corresponding state is $u=0$.

We use a $P_2$-Lagrange finite element approximations for $u$ and $p$ solutions of the state and costate equations (\ref{eq:state}) and (\ref{adjoint_eq}) respectively, and $P_0$-Lagrange finite element approximations for the potential $V$. In our simulations we have considered $V_{max}=10^4$ and a regular mesh of $200\times200$ elements, see Figure \ref{mesh}. We analyze different cases. For the optimal potential representation we use a grey scale, where black corresponds to 0 value and white to $V_{max}$.

\begin{figure}[!t]
\begin{center}
\includegraphics[width=15cm]{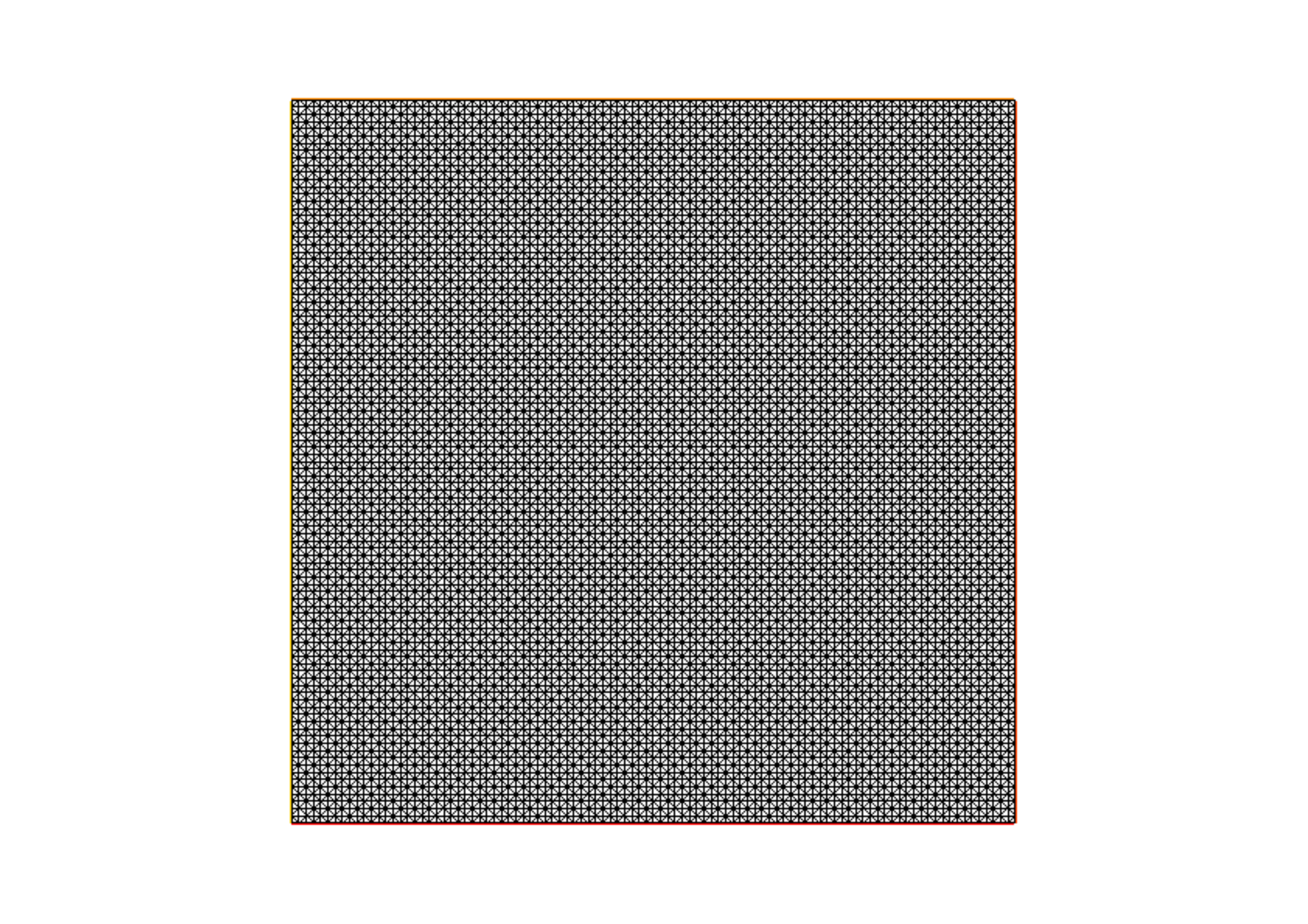}
\end{center}
\vskip-1.00cm
\caption{The domain $D$ and its triangulation. Number of nodes: 40401. Number of triangles: 80000.}\label{mesh}
\end{figure}

The first case we consider is when $f(x,y)=-(1+10x)$ (see Figure \ref{F6}) and $m=0.2$. We expect that the optimal potential consists of a quasi-ellipsoid-shape placed on the region where the values of the function $f$ are smaller. For this case we make two different experiments for various values of the parameter $\alpha$ related to the volume constraint. In Figure \ref{F7} left, we have used $\alpha=0.09$ while in Figure \ref{F7} right we have used $\alpha=3.10^{-4}$. We can observe that in the first case the optimal potential $V_{opt}$ is distributed on all the domain $D$, while in the second case (when $\alpha$ is small enough) the optimal potential is very close to an optimal shape.

\begin{figure}[!t]
\begin{center}
\includegraphics[width=12cm]{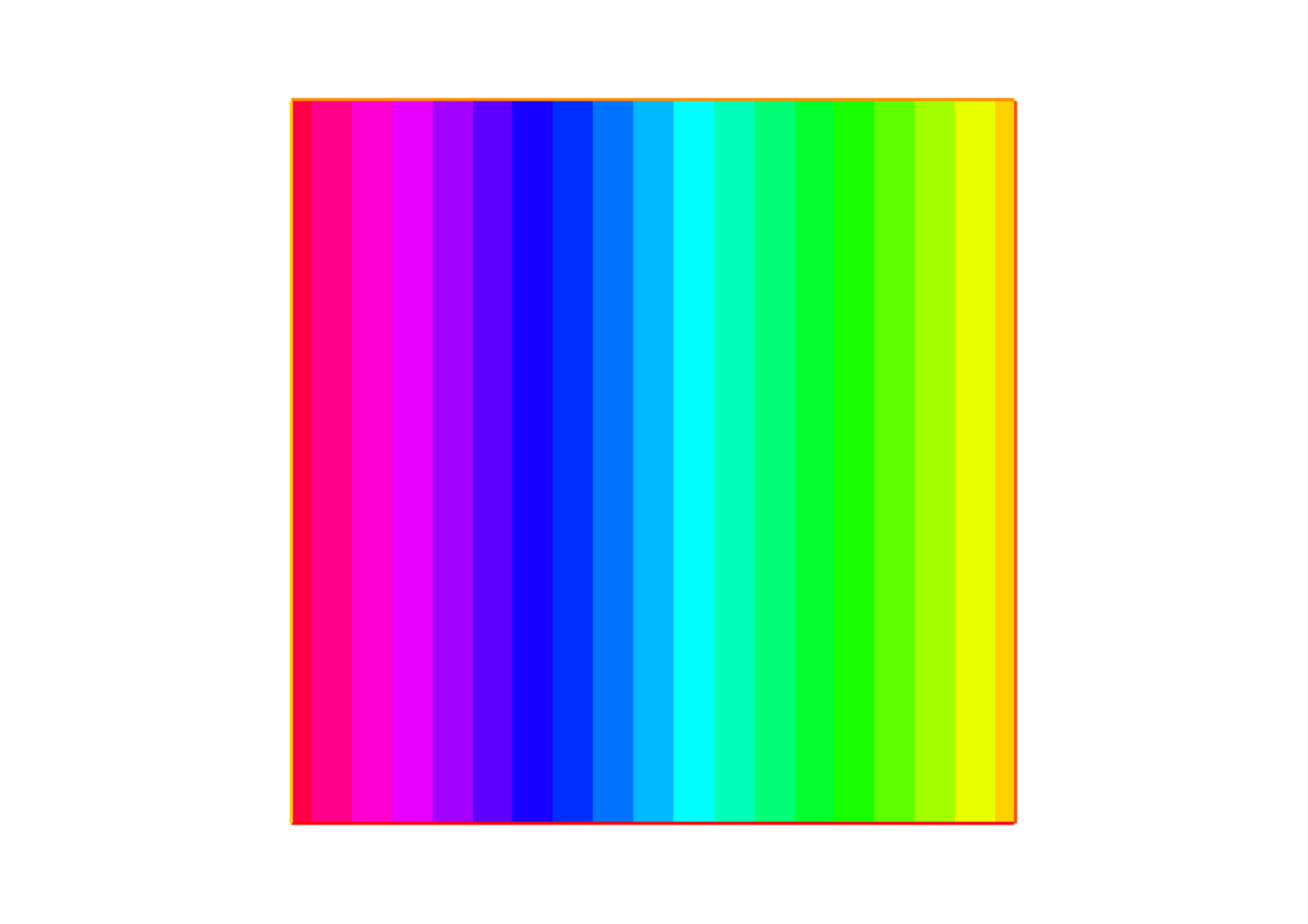}
\end{center}
\caption{The right hand side function $f(x,y)=-(1+10x)$ }\label{F6}
\end{figure}

In the subsequent numerical experiments we fix $\alpha=3.10^{-4}$ in order to recover optimal shapes, and we consider various functions $f$ for the right hand-side of the state equation, where $f$ changes its sign.

\begin{figure}[!t]
\begin{minipage}[!t]{18cm}
\centering
\hspace{-3cm}
\includegraphics[width=11.5cm]{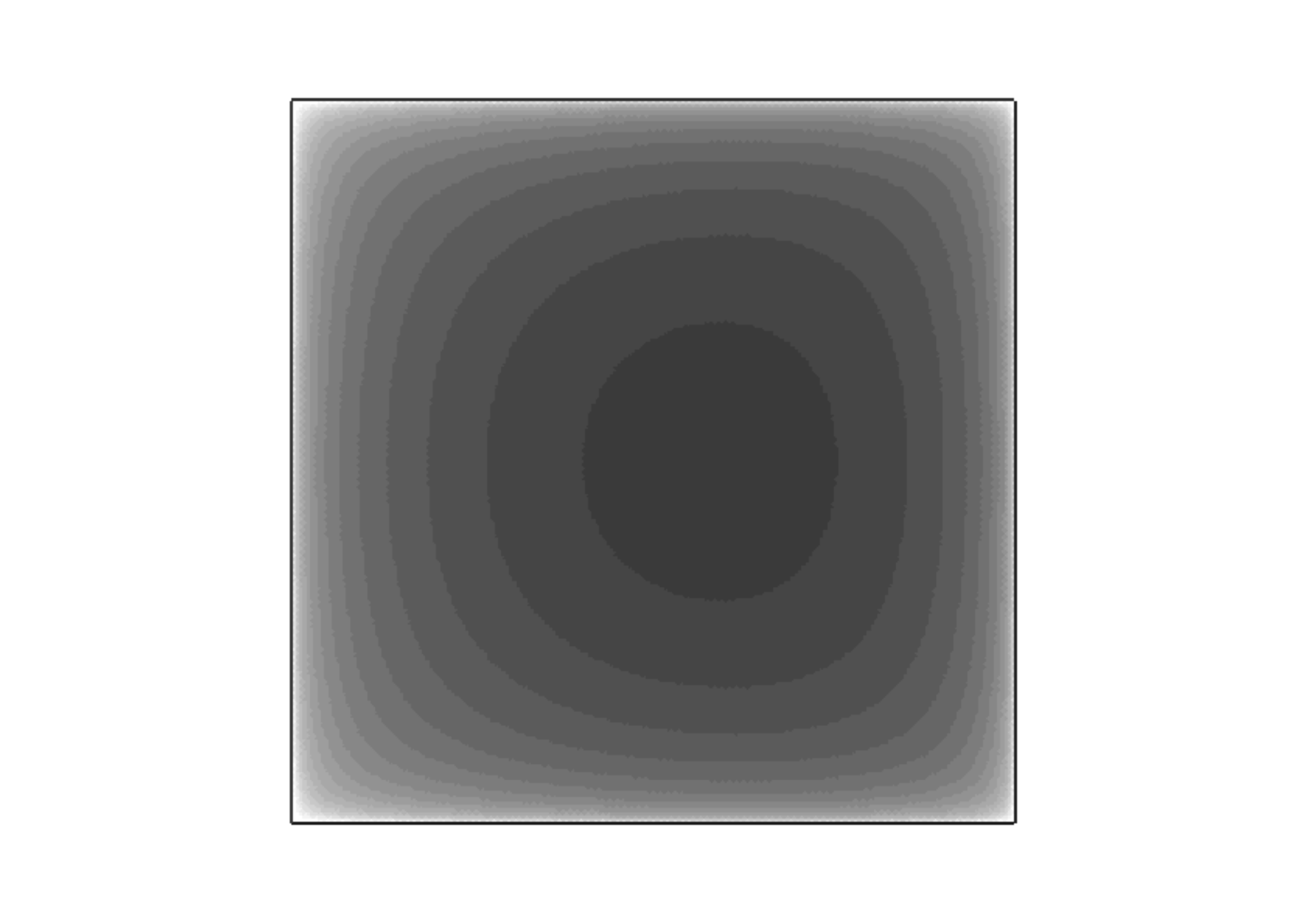}
\hspace{-3.2cm}
\includegraphics[width=11.5cm]{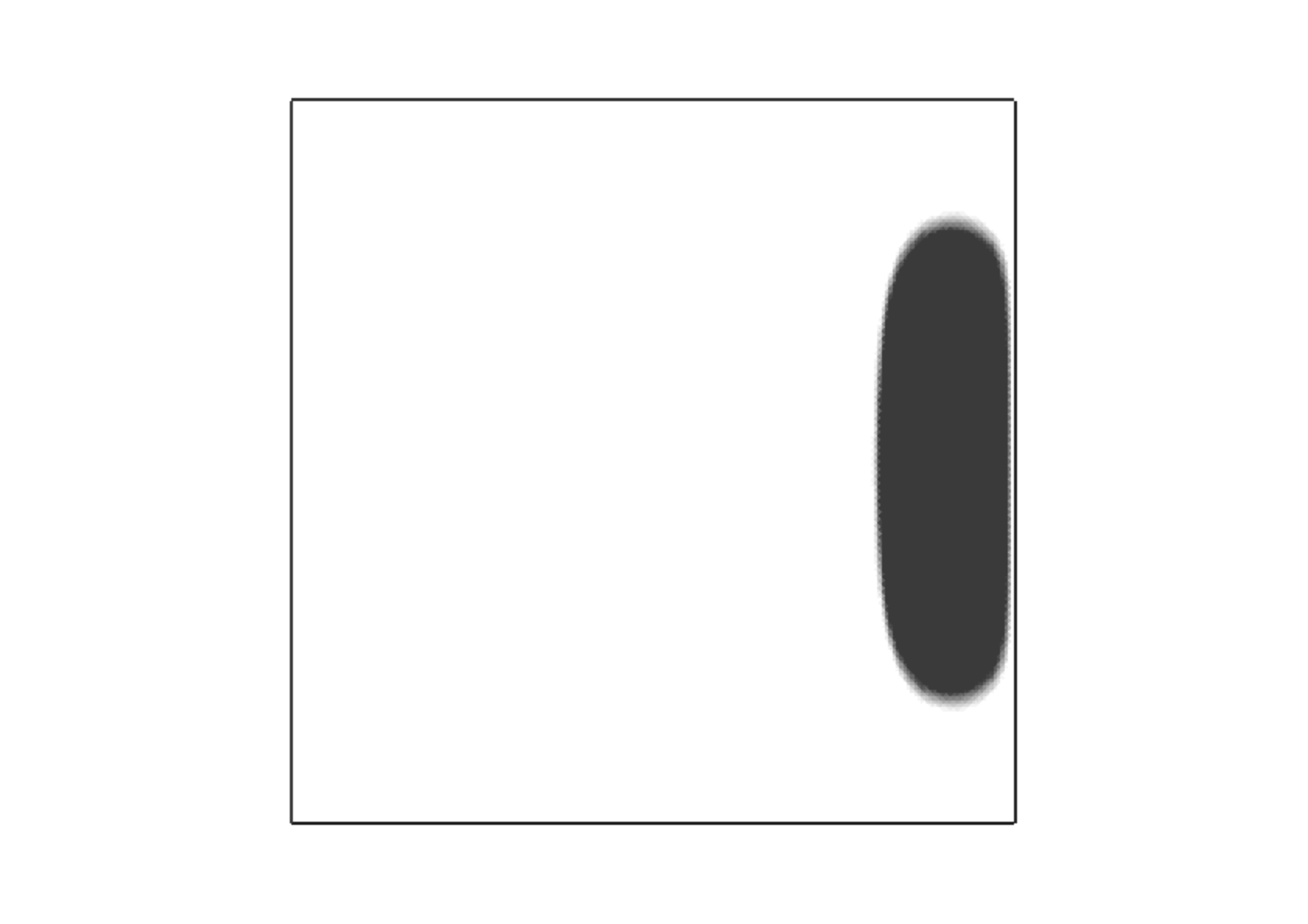}
\end{minipage}\vskip-0.60cm
\caption{Example 1 -- The optimal potential $V_{opt}$ for volume contraint $m=0.2=m_{opt}$. Case $\alpha=0.09$ (left) and $\alpha=3.10^{-4}$ (right).} \label{F7}
\end{figure}

For the Example 2 we consider the right hand-side function:
$$f(x,y)=\begin{cases}
-1&\hbox{if }y-1.4x\ge0.3\\
1&\hbox{if }y-1.4x<0.3
\end{cases}$$
negative on a corner of the domain D, and positive on the rest (see Figure \ref{F4}). In this case, we make two simulation with volume constraints $m=0.2$ (small volume) and $m=0.45$ (larger volume). In both cases we observe that the optimal shapes are placed near the corner where the function $f$ is negative (see Figure \ref{F3}). However, in the case of small volume constraint the optimal domain $\O_{opt}$ has volume equal to $m$ (saturation of the constraint, see Figure \ref{F3} left), while in the case of larger $m$ the optimal domain satisfies $|\O_{opt}|<m$ (see Figure \ref{F3} right). For instance, in the case under consideration, the optimal domain uses only $0.33276$ of the volume, of the $0.45$ available.

\begin{figure}[!t]
\begin{center}
\includegraphics[width=12cm]{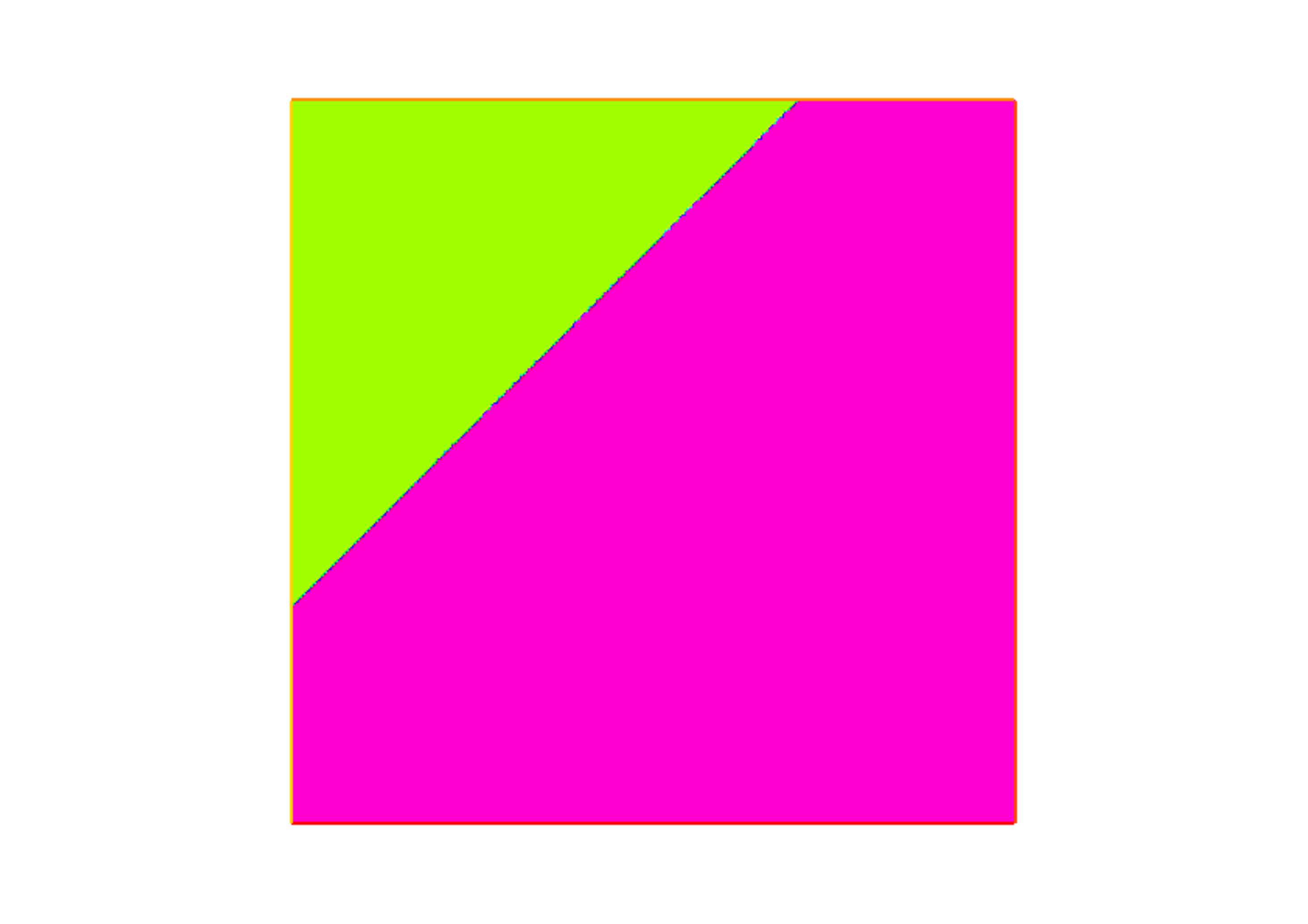}
\end{center}\vskip-0.7cm
\caption{The right hand side function $f(x,y)=-1$ if $y-1.4x\ge 0.3$, and $f(x,y)=1$ if $y-1.4x< 0.3$}\label{F4}
\end{figure}

\begin{figure}[!t]
\begin{minipage}[!t]{18cm}
\centering
\hspace{-3cm}
\includegraphics[width=11.8cm]{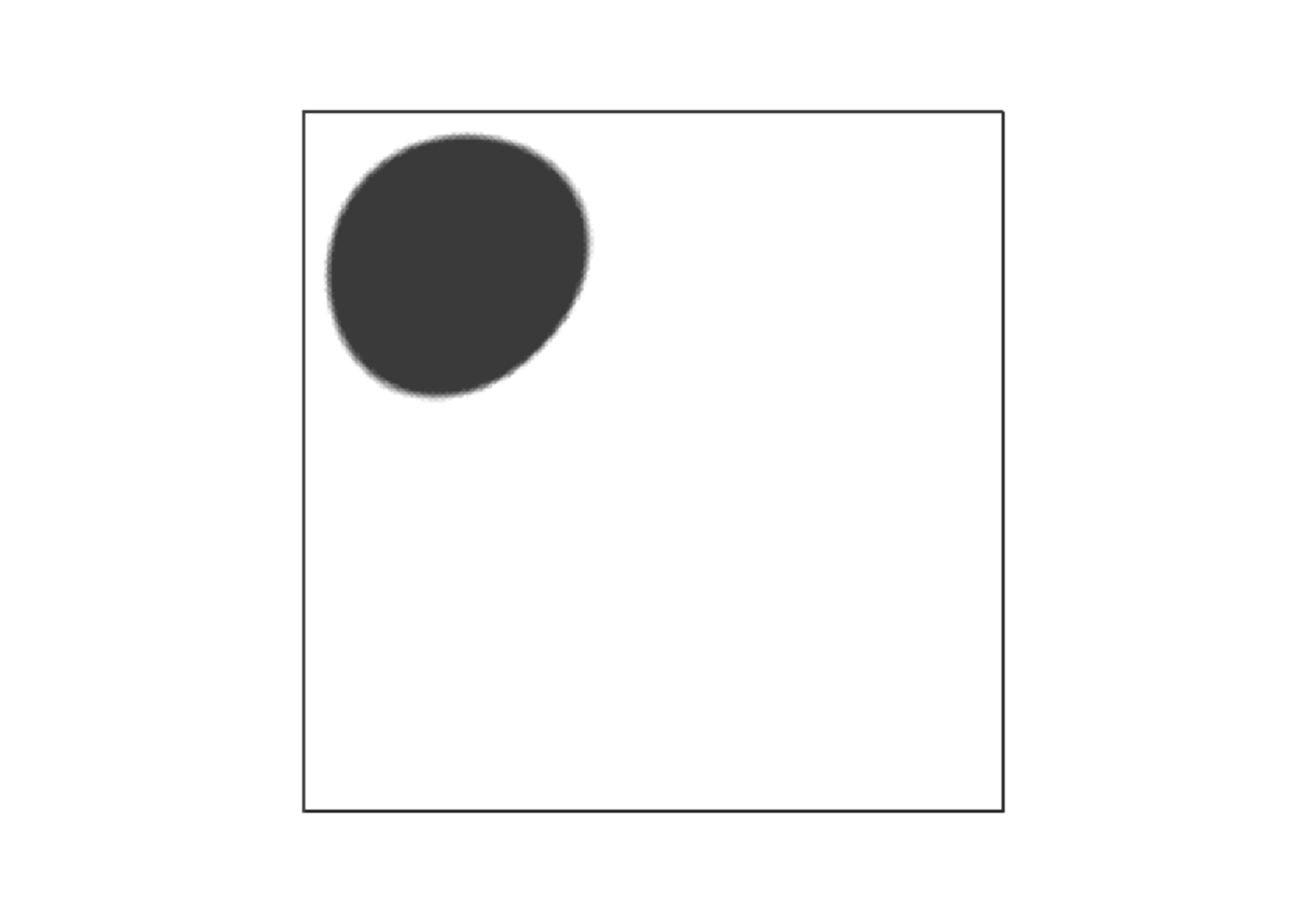}
\hspace{-3.2cm}
\includegraphics[width=11.5cm]{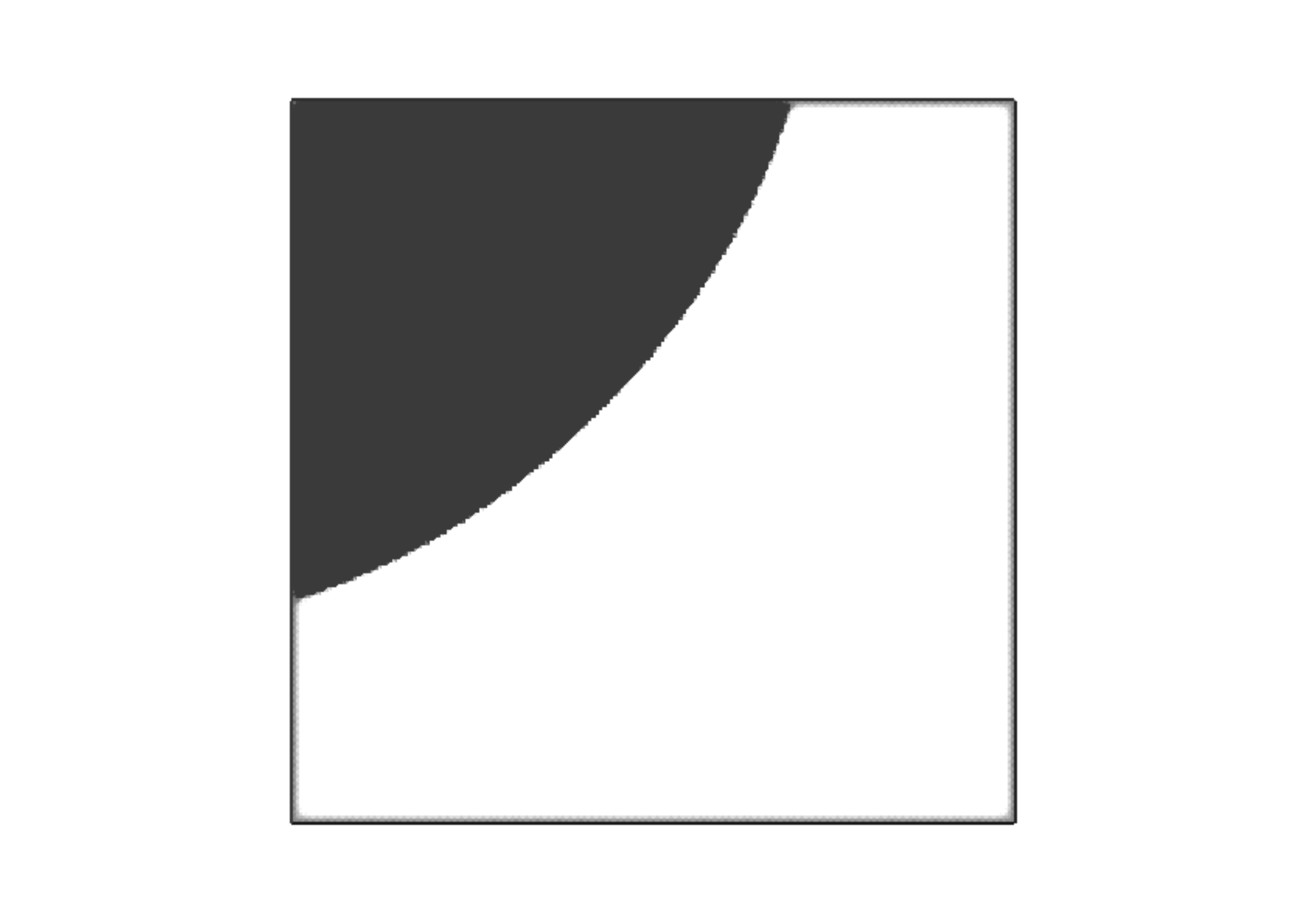}
\end{minipage}\vskip-0.7cm
\caption{Example 2 -- Optimal potential $V_{opt}$. Case: $m=0.2$ (left), $m=0.45$ occupied volume $0.33276$ (right).} \label{F3}
\end{figure}

For the Example 3 the right hand-side function which we consider is a characteristic function which takes the values $1$ on a centered non-symmetric cross and $-1$ on the rest of the domain $D$ (see Figure \ref{F1} left). In this case we have imposed a volume constraint $m=0.45$ and we observe (see Figure \ref{F1} right) that the optimal shape is made of four small balls of different sizes at the corners of the square domain outside of the cross and the volume constraint is saturated.

Finally, in the Example 4 we consider for the right hand-side $f$ the reverse case of the Example 3. We consider a characteristic function where on a centered non-symmetric cross takes the value $-1$ and $1$ on the rest of the domain (see Figure \ref{F2} left). For this simulation the results give an optimal shape that is placed around the cross, including regions where $f$ is negative but also small areas around the cross where $f$ is positive. The volume constraint in this case is not saturated using $0.378404$ of the $m=0.5$ available.

\begin{figure}[!t]
\begin{minipage}[!t]{18cm}
\centering
\hspace{-3cm}
\includegraphics[width=11.5cm]{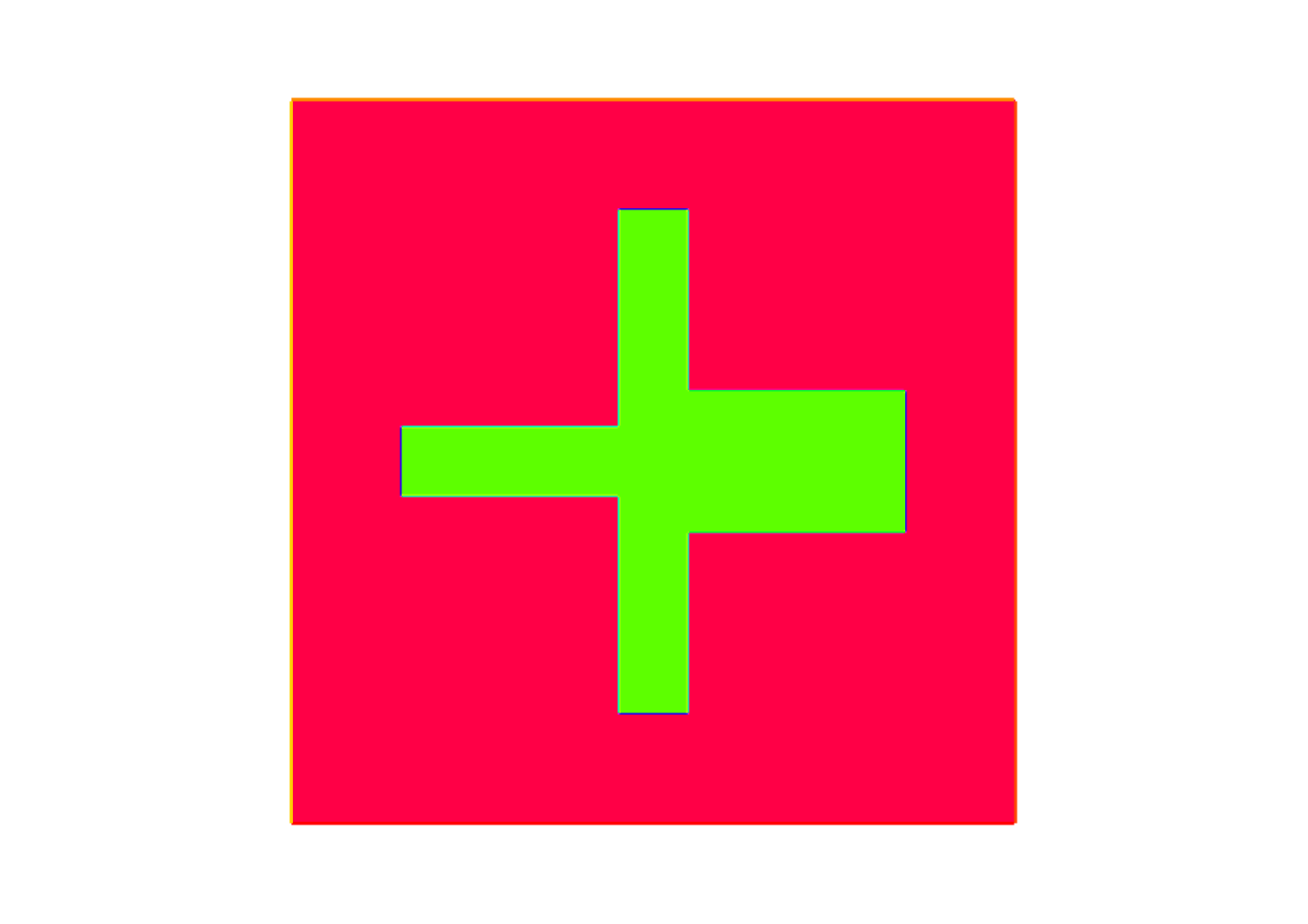}
\hspace{-3.2cm}
\includegraphics[width=11.5cm]{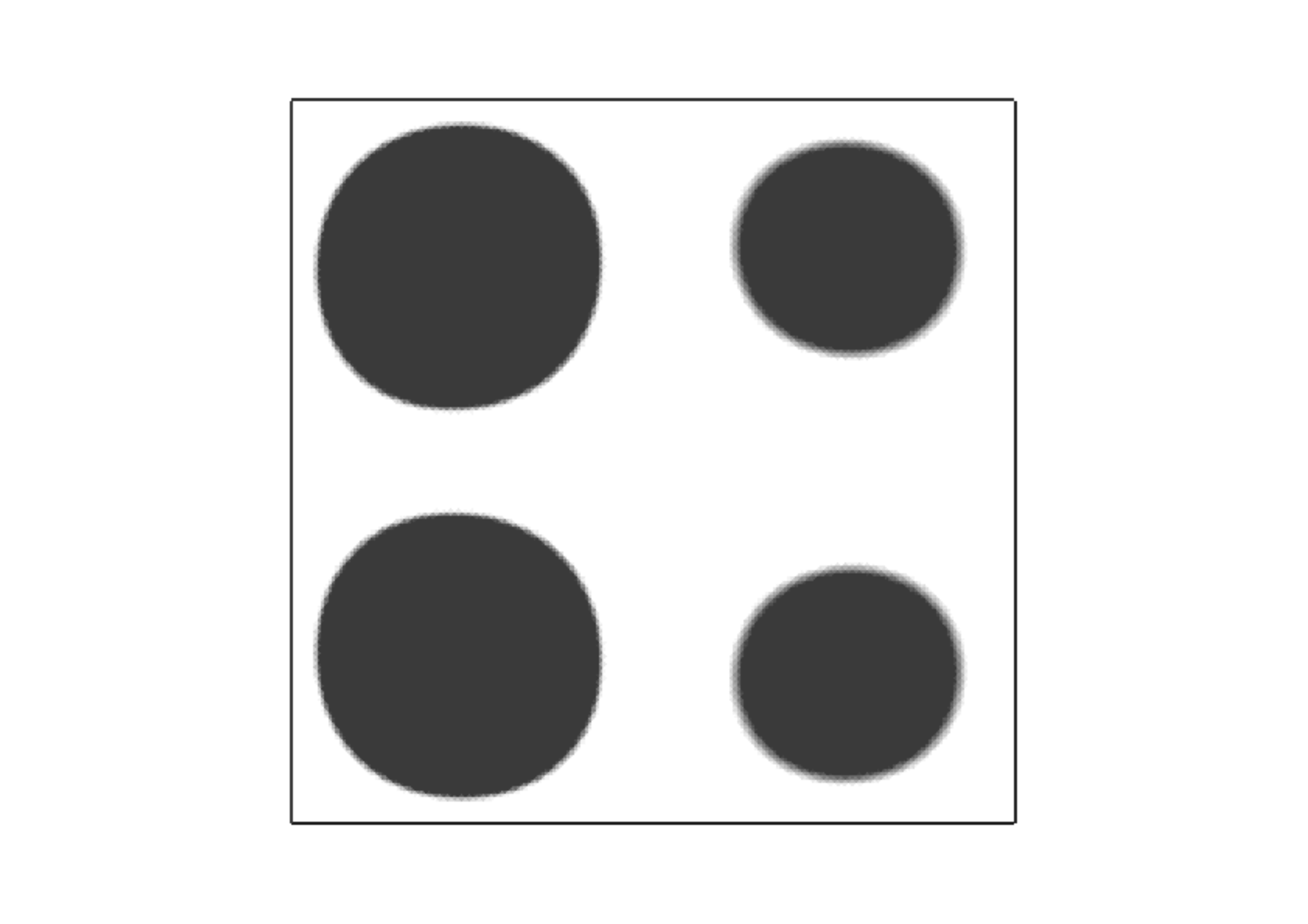}
\end{minipage}\vskip-0.7cm
\caption{Example 3 -- The right hand side function $f$ (left) and the optimal potential $V_{opt}$ (right). The volume $m=0.45$ is all occupied.} \label{F1}
\end{figure}

\begin{figure}[!t]
\begin{minipage}[!t]{18cm}
\centering
\hspace{-3cm}
\includegraphics[width=11.5cm]{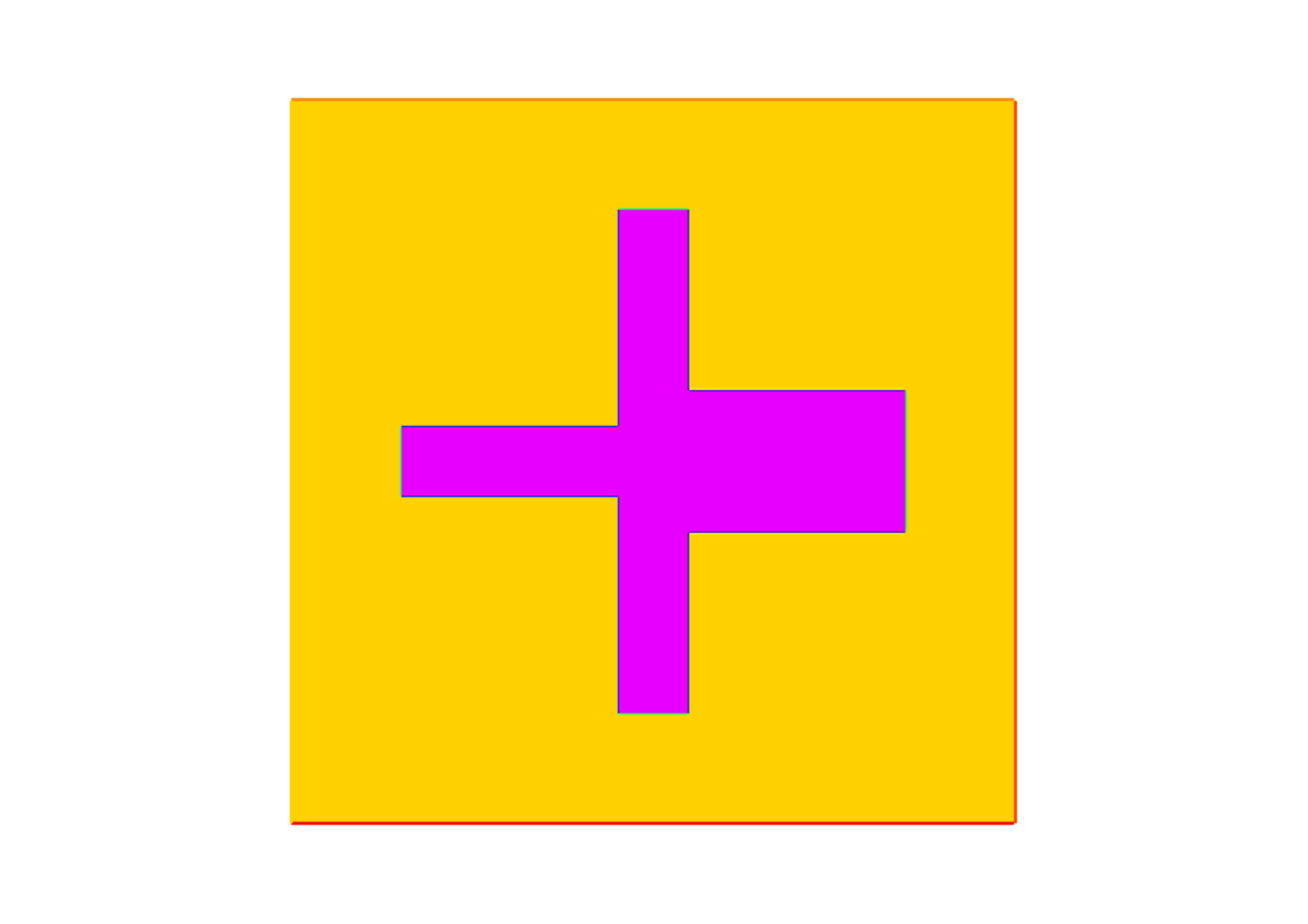}
\hspace{-3.2cm}
\includegraphics[width=11.5cm]{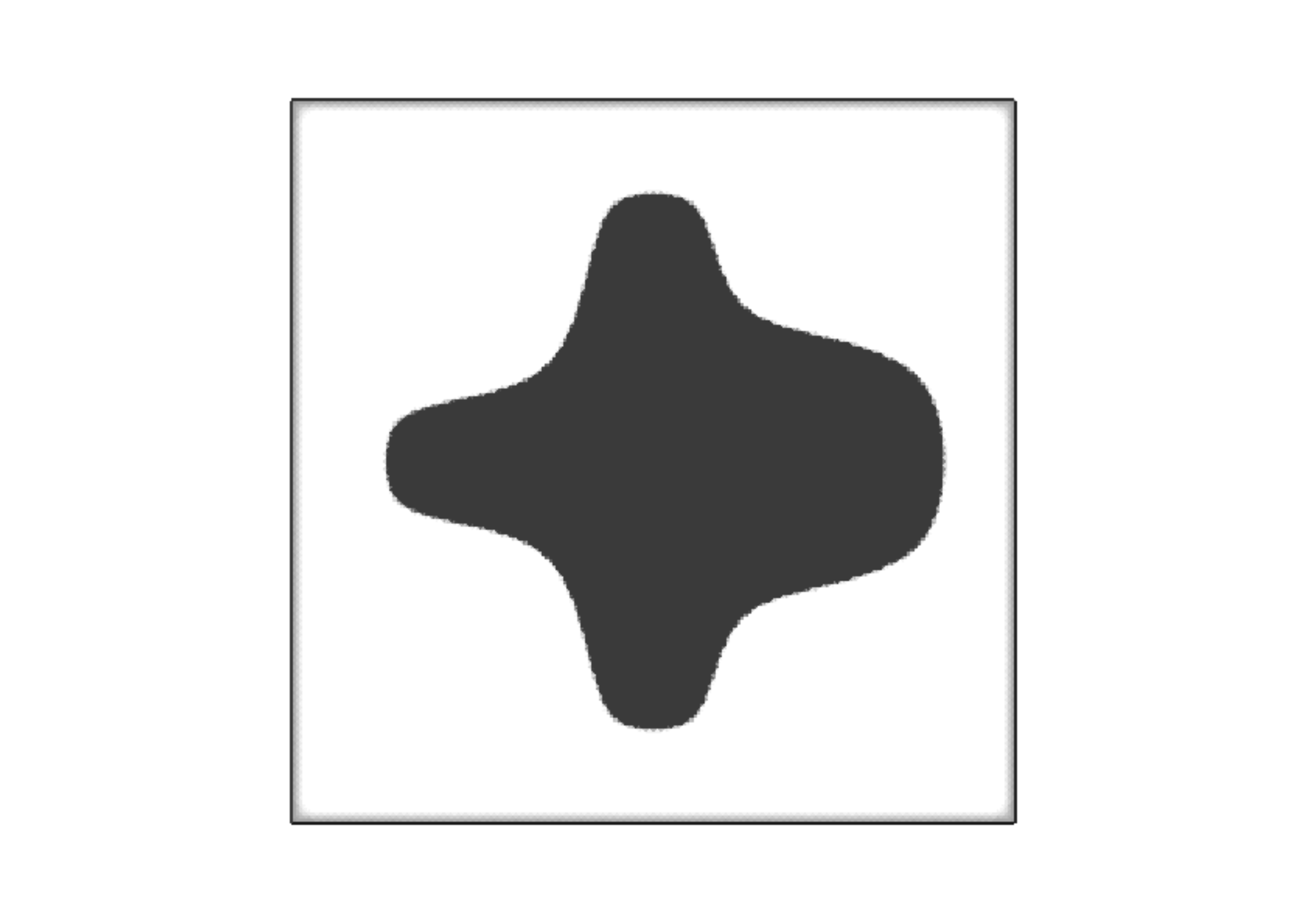}
\end{minipage}\vskip-0.7cm
\caption{Example 4 -- The right hand side function $f$ (left) and the optimal potential $V_{opt}$ (right). The occupied volume is $0.378404$ of the $m=0.5$ available.}\label{F2}
\end{figure}

In conclusion, according to the previous results we have shown the numerical evidence that the optimization problems in the form of (\ref{minpb}) admit optimal solutions when the data $f$ and $g$ are allowed to change sign. We can observe that in order to approximate the shape optimization problem with Dirichlet condition on the free boundary, taking the function $\Psi(s)=e^{-\alpha s}$, with $\alpha$ small enough, is a good choice in order to achive optimal shapes. Moreover, we can observe that the optimal shapes are located mostly in areas where the sign of $f$ is negative but they may in some cases occupy also small regions where $f$ is positive. Finally, the optimal domains may not always saturate the volume constraint.

\bigskip
\ack This work started during a visit of the second author at the Department of Mathematics of University of Pisa and continued during a stay of the authors at the Centro de Ciencias de Benasque ``Pedro Pascual''. The authors gratefully acknowledge both Institutions for the excellent working atmosphere provided. The work of the first author is part of the project 2015PA5MP7 {\it``Calcolo delle Variazioni''} funded by the Italian Ministry of Research and University. The first author is member of the Gruppo Nazionale per l'Analisi Matematica, la Probabilit\`a e le loro Applicazioni (GNAMPA) of the Istituto Nazionale di Alta Matematica (INdAM). The second author has been partially supported by FEDER and the Spanish Ministerio de Econom\'ia y Competitividad project MTM2014-53309-P.

\bigskip
{\small\noindent
Giuseppe Buttazzo:\\
Dipartimento di Matematica,
Universit\`a di Pisa\\
Largo B. Pontecorvo 5,
56127 Pisa - ITALY\\
{\tt buttazzo@dm.unipi.it}\\
{\tt http://www.dm.unipi.it/pages/buttazzo/}

\bigskip\noindent
{Faustino Maestre Caballero:\\
Dpto. Ecuaciones Diferenciales y An\'alisis Num\'erico, 
Universidad de Sevilla\\
C/ Tarfia s/n. Aptdo 1160, 
41080 Sevilla - SPAIN}\\
{\tt fmaestre@us.es}\\
{\tt http://personal.us.es/fmaestre}

\bigskip\noindent
{Bozhidar Velichkov:\\
Laboratoire Jean Kuntzmann (LJK), 
Universit\'e Grenoble Alpes\\
B\^atiment IMAG, 700 Avenue Centrale, 
38401 Saint-Martin-d'H\`eres - FRANCE}\\
{\tt bozhidar.velichkov@univ-grenoble-alpes.fr}\\
{\tt http://www.velichkov.it}

\end{document}